\title{\vspace{-0.1cm} Unavoidable patterns}
\author{
Jacob Fox\thanks{Department of Mathematics, Princeton, Princeton,
NJ 08544. Email: {\tt jacobfox@math.princeton.edu}. Research supported by
an NSF Graduate Research Fellowship and a Princeton Centennial
Fellowship.} \and Benny Sudakov\thanks{Department of Mathematics, UCLA, Los Angeles, 90095. E-mail:
bsudakov@math.ucla.edu.
Research supported in part by NSF CAREER award DMS-0546523, NSF grant
DMS-0355497, and a USA-Israeli BSF grant.}}
\newenvironment{proof}
      {\medskip\noindent{\bf Proof.}\hspace{1mm}}
      {\hfill$\Box$\medskip}
\def\qed{\ifvmode\mbox{ }\else\unskip\fi\hskip 1em plus 10fill$\Box$}
\newtheorem{theorem}{Theorem}[section]
\newtheorem{lemma}[theorem]{Lemma}
\newtheorem{proposition}[theorem]{Proposition}
\newtheorem{conjecture}[theorem]{Conjecture}
\begin{document}
\date{}

\maketitle

\begin{abstract}
Let $\mathcal{F}_k$ denote the family of $2$-edge-colored complete graphs on $2k$ vertices in which one color forms either a clique of order
$k$ or two disjoint cliques of order $k$. Bollob\'as conjectured that for every $\epsilon>0$ and positive
integer $k$ there is an $n(k,\epsilon)$ such that every $2$-edge-coloring of the complete graph of order
$n \geq n(k,\epsilon)$ which has at least $\epsilon {n \choose 2}$ edges in each color contains a member of $\mathcal{F}_k$.
This conjecture was proved by Cutler and Mont\'agh, who showed that $n(k,\epsilon)<4^{k/\epsilon}$.
We give a much simpler proof of this conjecture which in addition shows that $n(k,\epsilon)<\epsilon^{-ck}$ for some constant
$c$. This bound is tight up to the constant factor in the exponent for all $k$ and $\epsilon$. We also discuss similar results for tournaments and hypergraphs.
\end{abstract}

\section{Introduction}

The {\it Ramsey number} $R(k)$ is the least positive integer $n$
such that every $2$-edge-coloring of the complete graph
$K_n$ contains a monochromatic clique of order $k$.  A classical result of
Erd\H{o}s and Szekeres \cite{ErSz}, which is a quantitative version
of Ramsey's theorem, implies that $R(k) < 2^{2k}$ for every
positive integer $k$. Erd\H{o}s \cite{Er3} showed using
probabilistic arguments that $R(k) > 2^{k/2}$ for $k
> 2$. Over the last sixty years, there have been several
improvements on these bounds (see, e.g., \cite{Co}). However,
despite efforts by various researchers, the constant factors in the
above exponents remain the same.

If we are interested in finding a subgraph in a $2$-edge-coloring of $K_n$ that is not monochromatic,
we must assume that each color is sufficiently represented, e.g., that
each color class has at least $\epsilon{n \choose 2}$ edges. Let $\mathcal{F}_k$ denote the family of $2$-edge-colored
complete graphs on $2k$ vertices in which one color forms either a clique of order
$k$ or two disjoint cliques of order $k$. Consider a $2$-edge-coloring of $K_n$ with $n$ even in which one of the colors forms
a clique of order $n/2$ or two disjoint cliques of order $n/2$. Clearly,
these colorings have at least roughly $1/4$ of the edges in each
color, and nevertheless they basically do
not contain any colored patterns except those in $\mathcal{F}_k$. This shows that the
$2$-edge-colorings in $\mathcal{F}_k$ are essentially the only types of
of patterns that are possibly unavoidable in $2$-edge-colorings that are far from being monochromatic. It is also clear
that $\mathcal{F}_k$ forms a minimal such family. Generalizing the classical Ramsey problem, Bollob\'as conjectured that for
every $\epsilon>0$ and $k$ there is an $n_0$ such that every $2$-edge-coloring of
 $K_n$ with $n \geq n_0$ which has at least $\epsilon {n \choose 2}$ edges in each color contains a member of
$\mathcal{F}_k$. This conjecture was proved by
Cutler and Mont\'agh \cite{CuMo}. Let $n(k,\epsilon)$ denote the minimum $n_0$ which satisfies Bollobas's conjecture.
Cutler and Mont\'agh proved that $n(k,\epsilon)<4^{k/\epsilon}$. Here we present a much simpler proof of Bollobas's conjecture
which also gives a better bound on $n(k,\epsilon)$ when $\epsilon$ is small (e.g., when $\epsilon<1/7$).

\begin{theorem}\label{main}
If $n \geq
\left(16/\epsilon\right)^{2k+1}$, then every $2$-edge-coloring of $K_n$
with at least $\epsilon{n \choose 2}$ edges in each color contains a member of $\mathcal{F}_k$.
\end{theorem}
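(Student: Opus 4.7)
The plan is to build the $2k$ vertices of an $\mathcal{F}_k$-pattern iteratively, where at each step we pick one more vertex while shrinking a candidate pool by at most a factor of $\epsilon/16$. Since $n \geq (16/\epsilon)^{2k+1}$, this leaves room for $2k$ such iteration steps with a safety factor of one more power of $16/\epsilon$ for initial slack.

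First I would pass to a \emph{balanced} subset $V_0 \subseteq V(K_n)$ on which every vertex has both color-$1$ and color-$2$ degree at least $\epsilon n / 4$. Since, by an averaging argument, at most a small constant fraction of vertices fail this, we may take $|V_0| \geq n/2$. The payoff is that the density in each color is preserved ``locally'' as we drill down into neighborhoods.

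Then I would iteratively construct $v_1, \ldots, v_{2k}$ together with nested candidate sets $V_0 \supseteq V_1 \supseteq \cdots \supseteq V_{2k-1}$ satisfying $|V_i| \geq (\epsilon/16)|V_{i-1}|$. At each stage, $V_i$ consists of those remaining vertices that still have the correct colored edges to all $v_1, \ldots, v_i$ (matching some evolving template for the final pattern). To pick $v_{i+1} \in V_i$ and update $V_{i+1}$: since balancedness is inherited (a vertex of $V_i$ still has many color-$1$ and color-$2$ neighbors within $V_i$, up to a controlled loss), averaging yields a vertex whose ``correct-color'' degree in $V_i$ is at least $(\epsilon/16)|V_i|$, which serves as $v_{i+1}$, and restricting to those neighbors maintains the invariant. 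After $2k$ steps, the selected vertices, with an appropriate partition, form an element of $\mathcal{F}_k$.

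The main obstacle I anticipate is handling the two pattern types (``one clique'' Type 1 and ``two cliques'' Type 2) uniformly and choosing the correct template adaptively. Committing to a fixed template upfront (say, Type 2 in color $1$) can fail if the graph happens to lack that specific pattern; I would instead select each $v_{i+1}$ and its intended assignment to $A$ or $B$ greedily to preserve a maximal candidate pool, and then argue at the end --- by a case analysis on the induced coloring within $\{v_1, \ldots, v_{2k}\}$ --- that the resulting configuration admits a partition realizing either Type 1 or Type 2. Verifying that balancedness is truly preserved across iterations, so that the greedy step can always be repeated with the correct loss factor, is the main technical work.
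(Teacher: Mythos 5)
There is a genuine gap, and it is exactly the obstacle that forces the paper to use dependent random choice instead of a greedy drill-down. Your key invariant --- that ``balancedness is inherited'' so that every vertex of $V_i$ still has many neighbors of \emph{both} colors inside $V_i$ --- is false. Once $|V_i|$ drops below $\epsilon n/4$, a vertex's entire blue neighborhood can lie outside $V_i$. Concretely, color red exactly the edges inside a set $S$ of size $\sqrt{\epsilon}\,n$ and blue everything else: each color has density at least $\epsilon$, the set of vertices with both red and blue degree at least $\epsilon n/4$ is precisely $S$ (so your claim $|V_0|\ge n/2$ already fails; the correct bound, Lemma~\ref{start}, is only $\epsilon n/2$, and that is essentially tight), and inside $V_0=S$ \emph{every} edge is red, so your iteration can never select a blue edge from within the pool even though the coloring plainly contains members of $\mathcal{F}_k$. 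A multiplicative loss of $\epsilon/16$ per vertex with the degree condition re-imposed inside the shrunken pool simply cannot be sustained.

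A second, independent problem is the endgame: a set of $2k$ adaptively chosen vertices need not contain a member of $\mathcal{F}_k$ under any partition, because every member of $\mathcal{F}_k$ contains a monochromatic $k$-clique, and guaranteeing one requires a pool of size $R(k)\approx 4^k$, not $2k$. Ramsey's theorem has to enter the argument twice, and your sketch never invokes it. The paper's route is structurally different: after the balancing step it applies dependent random choice (Lemma~\ref{drc}) to extract a single set $T$ with $|T|\ge R(k)$ in which \emph{every} $k$-tuple has at least $R(k)$ common neighbors in each color \emph{in the original graph} --- this is what replaces your (unavailable) inherited balancedness. One then finds a monochromatic $k$-clique $D$ in $T$ by Ramsey, passes to the opposite-color common neighborhood of $D$ (which has size $\ge R(k)$ by construction), and applies Ramsey again to get the second clique $X$; the case analysis on the color of $X$ yields the two pattern types. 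To repair your proof you would need to replace the vertex-by-vertex greedy with some mechanism that controls common neighborhoods of all $k$-tuples at once, which is precisely what Lemma~\ref{drc} does.
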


\noindent Using a simple probabilistic argument, one can show that the bound in this theorem is tight up to a constant factor
in the exponent for all $k$ and $\epsilon$ (see Proposition \ref{probprop}).

In this paper, we also consider an analogous problem for tournaments.
A {\it tournament} is a directed graph obtained by choosing a
direction for each edge in an undirected complete graph. A
tournament is {\it transitive} if there is a linear ordering of the
vertices such that all edges point in one direction. Erd\H{o}s and
Moser \cite{ErMo} obtained a variant of Ramsey's theorem for tournaments. Let
$T(k)$ denote the minimum $n$ such that every tournament on $n$
vertices contains a transitive subtournament of order $k$. They proved that $2^{(k-1)/2} \leq T(k) \leq 2^{k-1}$.

In order to prove an analogue of Bollobas's conjecture for tournaments, the following definition appears to be the most natural.
A tournament on $n$ vertices is {\it $\epsilon$-far from being transitive} if the direction of
at least $\epsilon n^2$ edges need to be changed to obtain a transitive tournament. We think this is a good analogue
of $2$-edge-colorings in which both colors are sufficiently represented. Let $D_k$ denote the tournament with
$3k$ vertices formed by taking three disjoint transitive tournaments each of size $k$ with vertex sets $V_0,V_1,V_2$, and
 directing all edges in $V_i \times V_{i+1}$ from $V_i$ to $V_{i+1}$ for $i=0,1,2$, where indices are taken modulo $3$.
By considering the tournament $D_{n/3}$ with $n$ a multiple of $3$, which is $\frac{1}{9}$-far from being transitive,
we see that $D_k$ is essentially the only possible type of unavoidable tournament in tournaments
that are far from being transitive. We prove the following variant of Bollobas's conjecture for tournaments.

\begin{theorem}\label{tournamentthm}
There is a constant $c$ such that if $n \geq \epsilon^{-ck/\epsilon^2}$ and $T$ is a tournament on $n$ vertices
which is $\epsilon$-far from being transitive, then $T$ contains $D_k$.
\end{theorem}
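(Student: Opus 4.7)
The plan is to find $D_k$ by first extracting three large vertex sets $U_0, U_1, U_2$ whose cross-edges all go in the cyclic direction $U_0 \to U_1 \to U_2 \to U_0$, and then applying the Erd\H{o}s--Moser transitive Ramsey bound $T(k) \leq 2^{k-1}$ inside each $U_i$ to produce the three transitive tournaments of size $k$. Since any linear ordering of the vertices in each $U_i$ is compatible with the cyclic arrangement, these three transitive subtournaments of size $k$ together realize $D_k$. Hence the main task is to locate large cleanly-oriented cyclic sets $U_a, U_b, U_c$.

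For that, I would apply a weak (Frieze--Kannan--type) regularity partition of $T$ into $M = \epsilon^{-O(1/\epsilon^2)}$ parts $V_1, \ldots, V_M$ of size roughly $n/M$ between most of whose pairs the cross-edge direction density is concentrated near its mean. The reduced tournament $R$ on $M$ vertices, obtained by orienting each pair by its majority cross-direction, inherits from $T$ a weighted far-from-transitive property, because regularity caps discrepancies in cut counts by $\epsilon n^2$. Applying the $k=1$ instance of the theorem to $R$ (or equivalently finding any cyclic triangle of parts) then yields three parts $V_a, V_b, V_c$ between which the cross-densities in $T$ are at least $1 - O(\epsilon)$ in the cyclic direction. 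A clean-up step removes the remaining misoriented edges by iteratively picking a vertex from one of the three parts whose correctly-oriented neighborhood in the other two is near-maximum, and restricting. After $O(\log(1/\epsilon))$ rounds one obtains subsets $U_a, U_b, U_c$ of size $\Omega(n/M)$ with all cross-edges correct. The budget $n/M \geq 2^{k-1}$ combined with $M = \epsilon^{-O(1/\epsilon^2)}$ gives the bound $n \geq \epsilon^{-ck/\epsilon^2}$ for a suitable constant $c$.

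The main obstacle will be the clean-up phase, where each refinement tightens the correctly-oriented density in one pair but may damage the densities between the other two parts; all three constraints have to be balanced throughout the iteration. I expect that a careful greedy choice suffices, since the initial near-$(1-O(\epsilon))$ fraction of correct cross-edges is high enough to absorb a controlled loss per round. An alternative route would avoid the regularity lemma altogether by directly counting cyclic triangles in $T$ (of which there are at least $\Omega(\epsilon^2 n^3)$, by an edge-flipping argument with respect to a back-edge-minimising ordering) and averaging to produce three large sets with most cross-edges in the cyclic direction, but simultaneously controlling all three cross-densities still requires an iterative argument of comparable delicacy.
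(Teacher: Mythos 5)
The central step of your argument has a genuine gap. After locating a cyclic triangle of parts $V_a,V_b,V_c$ in the reduced (majority-orientation) tournament, you claim that the cross-densities in $T$ are at least $1-O(\epsilon)$ in the cyclic direction. Weak regularity gives no such thing: it controls cut discrepancies, not the pair densities themselves, and a majority orientation only guarantees density greater than $1/2$. Consequently the clean-up phase cannot produce sets $U_a,U_b,U_c$ of size $\Omega(n/M)\geq 2^{k-1}$ with \emph{all} cross-edges cyclically oriented. Indeed, in a uniformly random tournament (which is $\Omega(1)$-far from transitive, so $M=O(1)$), the largest pair of sets $A,B$ with every edge directed from $A$ to $B$ has size $O(\log n)$, which is far below $\Omega(n)$ and also below $2^{k-1}$ in the regime $n\approx\epsilon^{-ck/\epsilon^2}$. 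So your intermediate target --- three large, fully correctly oriented sets inside which you then run Erd\H{o}s--Moser --- is unattainable in general, not merely hard to reach by the greedy refinement. The order of operations must be reversed: one first extracts \emph{small} transitive sets and only then forces correctness of the cross-edges among those few vertices. This is what the paper does: it counts ``positively oriented'' triangles across a random tripartition, uses dependent random choice to find a transitive $d$-set $X_1$ (with $d=k/\delta$) and a transitive $d$-set $X_2$ inside the common $H$-neighborhood of $X_1$ (so only $d^2$ cross-edges need to be controlled), and then applies a Zarankiewicz/K\H{o}v\'ari--S\'os--Tur\'an count to select $k$ of the $d$ pairs $(x_i,y_i)$ that share $n^{1/2}$ common apexes in $V_0$, from which the third transitive set is drawn.

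A secondary but nontrivial issue: your parenthetical claim that an $\epsilon$-far tournament contains $\Omega(\epsilon^2 n^3)$ directed triangles is asserted rather than proved. This is Lemma \ref{farmany}, and its proof (a recursive decomposition showing that a tournament with $t$ directed triangles can be made transitive by reversing $O((nt)^{1/2})$ edges) is one of the two substantive components of the whole argument; the ``edge-flipping argument with respect to a back-edge-minimising ordering'' you gesture at would need to be carried out in full.
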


It is easy to see that if a tournament on $n$ vertices contains $\epsilon n^3$
directed triangles, then it is $\epsilon/3$-far from being transitive. Indeed, each edge
is in at most $n$ directed triangles, so a tournament with $\epsilon n^3$ directed
triangles contains $\frac{\epsilon}{3}n^2$ edge-disjoint directed triangles. The
direction of at least one edge of each such directed triangle needs to be changed in
order to make the tournament transitive. In the other direction, the following lemma demonstrates that if a tournament is
$\epsilon$-far from being transitive, then it contains many directed triangles. We will use this lemma in the proof
of Theorem \ref{tournamentthm}.

\begin{lemma}\label{farmany}
There is a constant $c>0$ such that if a tournament $T$ on $n$ vertices is $\epsilon$-far from being transitive, then it
contains at least $c\epsilon^2n^3$ directed triangles.
\end{lemma}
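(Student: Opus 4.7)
The plan is to prove the contrapositive: for any tournament $T$ on $n$ vertices with $t$ directed triangles, the minimum number $\tran(T)$ of edge reversals needed to make $T$ transitive satisfies $\tran(T) \leq C\sqrt{tn}$ for some absolute constant $C$. This immediately gives the lemma: if $T$ is $\epsilon$-far from transitive then $\epsilon n^2 \leq \tran(T) \leq C\sqrt{tn}$, hence $t \geq c\epsilon^2 n^3$ with $c = 1/C^2$.

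The key ingredient is a ``median vertex'' decomposition. For each vertex $v$, the number $g(v)$ of directed triangles containing $v$ equals $e(N^+(v),N^-(v))$, the number of arcs going from the out-neighbors of $v$ to its in-neighbors (every directed triangle through $v$ uses one such crossing arc). Since each directed triangle is counted three times in $\sum_v g(v)$, there is a vertex $v_0$ with $g(v_0) \leq 3t/n$. Ordering $v_0$ between $N^-(v_0)$ and $N^+(v_0)$, with each block arranged optimally, produces a linear ordering whose backward edges lie either inside $T[N^-(v_0)]$, inside $T[N^+(v_0)]$, or among the $g(v_0)$ arcs from $N^+(v_0)$ to $N^-(v_0)$. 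Hence
\[
\tran(T)\ \leq\ \tran(T[N^-(v_0)]) + \tran(T[N^+(v_0)]) + g(v_0).
\]
Applying the inductive hypothesis to each subtournament and using the Cauchy--Schwarz inequality
$\sqrt{t^-b}+\sqrt{t^+a}\leq \sqrt{(t^-+t^+)(a+b)}$ together with $t^-+t^+\leq t-g(v_0)$ and $a+b=n-1$ produces the recursive bound $\tran(T)\leq C\sqrt{(t-g(v_0))n}+g(v_0)$.

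The main obstacle is that the extra $g(v_0)$ term prevents the induction from closing for all $t$: direct algebra shows $C\sqrt{(t-g(v_0))n}+g(v_0)\leq C\sqrt{tn}$ precisely when $\sqrt{t}+\sqrt{t-g(v_0)}\leq C\sqrt{n}$, which fails in the regime $n \ll t \ll n^3$. I would resolve this by case analysis on $t$. When $t\leq C^2n$, the trivial estimate $\tran(T)\leq t$, which holds because reversing one arc from each directed triangle leaves a tournament with no directed $3$-cycle and hence transitive, already gives $\tran(T)\leq t\leq C\sqrt{tn}$. When $t\geq n^3/(4C^2)$, the bound $\tran(T)\leq\binom{n}{2}$ already satisfies $\tran(T)\leq C\sqrt{tn}$. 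For the intermediate regime one either refines the choice of $v_0$ so that simultaneously $g(v_0)$ is small and $|N^\pm(v_0)|$ are balanced (say, by minimizing $g(v)+|d^+(v)-(n-1)/2|$), or strengthens the inductive hypothesis to $\tran(T)\leq C\sqrt{tn}+Dn$ with a large enough $D$ to absorb the $g(v_0)$ correction. Since the lemma only asks for the existence of some positive constant $c$, this piecewise combination of the recursive argument with the two trivial bounds is enough to yield $\tran(T)=O(\sqrt{tn})$ and hence the desired $t\geq c\epsilon^2 n^3$.
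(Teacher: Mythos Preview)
Your overall framework matches the paper's: both prove the contrapositive $F(n,t)=O(\sqrt{nt})$ via the same vertex-splitting recursion, where the cost of splitting at $v$ is the number $g(v)$ of directed triangles through $v$. But the proposal has a real gap exactly where you say the ``main obstacle'' is.

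First, the small-$t$ justification is wrong: ``reversing one arc from each directed triangle'' does not in general leave a triangle-free tournament, since each reversal can create new directed triangles. So the bound $\tran(T)\le t$ is not established by that sentence (whether or not it happens to be true).

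More seriously, neither of your fixes for the intermediate regime works as stated. Strengthening the hypothesis to $\tran(T)\le C\sqrt{tn}+Dn$ fails: after Cauchy--Schwarz you must absorb $g(v_0)$, but even with an average vertex one only gets $g(v_0)\le 3t/n$, which for $t$ of order $n^2$ or more is $\Theta(n)$ or larger and cannot be swallowed by a constant $D$ times the $O(1)$ saving from $n\to n-1$. Choosing a balanced $v_0$ (both neighborhoods of size $\ge n/6$) is indeed what is needed, but that alone does not close the induction either: with $n_i\le 5n/6$ one still only gains $C(\sqrt{tn}-\sqrt{t(n-1)})=O(\sqrt{t/n})$ per step, which again is dominated by $g(v_0)=O(t/n)$ unless $t=O(n)$.

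The paper uses the balanced-vertex step but, crucially, does \emph{not} try to close a one-step induction. Instead it iterates the split until every part has size at most $36\sqrt{t/n}$, obtaining a binary tree of parts whose sizes shrink by a factor $\le 5/6$ along every root-to-leaf path. The total reversal cost is then bounded by assigning to each directed triangle $D$ the weight $\sum_{D\subset W}18/|W|$ over the nodes $W$ containing it; the geometric decay gives each weight at most $18\sqrt{n/t}/(1-5/6)$, hence total cost $O(\sqrt{nt})$, and the leaves are finished off trivially. This ``iterate to a threshold and sum a geometric series'' is the missing idea in your sketch.
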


The following simple tournament construction demonstrates that the bound in Lemma \ref{farmany} is best possible up
to the constant factor $c$. Let $V_1,\ldots,V_d$ be disjoint vertex sets of size $3n$, and direct all edges from
$V_i$ to $V_j$ for $i<j$. The tournament restricted to each $V_i$ is a copy of $D_n$. Note that each $V_i$ contains $n^2$
edge-disjoint directed triangles and $n^3$ total directed triangles.
The tournament on $V_1 \cup \ldots \cup V_d$ we constructed has $N=3dn$ vertices,
is $\epsilon$-far from being transitive with $\epsilon=\frac{dn^2}{N^2}=\frac{1}{9d}$, and has only
$dn^3=3\epsilon^2 N^3$ directed triangles.

The statement of Lemma \ref{farmany} appears similar to the well
known triangle removal lemma of Ruzsa and Szemer\'edi \cite{RuSz}.
This lemma states that for every $\epsilon>0$ there is a constant
$\delta>0$ depending only on $\epsilon$ such that if a graph on $n$
vertices is $\epsilon$-far from being triangle-free, i.e., at least
$\epsilon n^2$ edges need to be deleted to make the graph
triangle-free, then it contains at least $\delta n^3$ triangles.
However, as $\epsilon$ tends to zero, in the triangle removal lemma
$\delta$ goes to $0$ much faster than in Lemma \ref{farmany}.

The rest of the paper is organized as follows. In the next section we give a simple proof of Bollobas's conjecture
and show that our upper bound on $n(k,\epsilon)$ is tight up to a constant factor in the exponent for all $k$ and $\epsilon$.
In Section \ref{section3} we study the tournaments that are far from being transitive and show that sufficiently large
such tournaments contain a copy of $D_k$. The last
section of this paper contains some concluding remarks, including a discussion of the possible generalization
of our theorems to hypergraphs. Throughout the paper, we systematically omit floor and ceiling signs
whenever they are not crucial for the sake of clarity of
presentation. We also do not make any serious attempt to optimize
absolute constants in our statements and proofs.

The {\it edge density} of a graph $G$ is the fraction of all pairs of vertices of $G$ that are edges.
In a red-blue edge coloring of $K_n$, the {\it
red neighborhood} $N_R(v)$ of a vertex $v$ is the set of vertices
adjacent to $v$ in color red, and the {\it red degree} of $v$ is $|N_R(v)|$. For a vertex subset $U$ of a graph
$G$, the {\it red common neighborhood} $N_R(U)$ is the set of
vertices adjacent in color red to all vertices in $U$. We similarly
define $N_B(v)$, {\it blue degree}, and $N_B(U)$.

\section{Proof of Theorem \ref{main}}

We first need a lemma which shows that any $2$-edge-coloring with edge density at least
$\epsilon$ in each color has many vertices with large degree in
both colors.

\begin{lemma}\label{start}
In every red-blue edge coloring of $K_n$ ($n \geq 4$) in which each
color has density at least $\epsilon$, the subset $S \subset
V(K_n)$ of vertices that have degree at least $\frac{\epsilon}{4} n$
in both colors has cardinality at least $\frac{\epsilon}{2} n$.
\end{lemma}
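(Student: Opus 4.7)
I would partition $V(K_n)$ into $A = \{v : d_R(v) < \epsilon n/4\}$, $B = \{v : d_B(v) < \epsilon n/4\}$, and the remaining set $S$. Since $d_R(v) + d_B(v) = n-1 > \epsilon n/2$ for $n \ge 4$ and $\epsilon \le 1/2$, the sets $A$ and $B$ are disjoint, so $|S| = n - |A| - |B|$, and the claim reduces to showing $|A| + |B| \le (1 - \epsilon/2)\,n$.

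The key observation concerns the bipartite interaction between $A$ and $B$. For each $v \in A$, fewer than $\epsilon n/4$ neighbors of $v$ are red, so the number of red edges between $A$ and $B$ is less than $|A|\epsilon n/4$; symmetrically, the number of blue edges between $A$ and $B$ is less than $|B|\epsilon n/4$. Every $A$--$B$ edge is one color or the other, hence $|A||B| < (|A|+|B|)\epsilon n/4$. Taking $|A| \le |B|$ without loss of generality (and handling $|B|=0$ trivially), dividing by $|B|$ yields $|A| < \epsilon n/2$.

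Next I would bound $|B|$ using the blue density. Since each $v \in B$ has $d_B(v) < \epsilon n/4$, summing blue degrees incident to $B$ gives $|E_B| < |B|\epsilon n/4 + \binom{n-|B|}{2}$. Combined with $|E_B| \ge \epsilon\binom{n}{2}$ and the substitution $m = n - |B|$, after clearing denominators this reads $f(m) > 0$, where $f(m) = 2m^2 - m(2+\epsilon n) - \epsilon n(n-2)$. A direct calculation gives $f(\epsilon n) = \epsilon n^2(\epsilon - 1) < 0$ (since $\epsilon < 1$) and $f(0) = -\epsilon n(n-2) < 0$; since $f$ is convex, $f < 0$ on all of $[0, \epsilon n]$, so $f(m) > 0$ forces $m > \epsilon n$, i.e., $|B| < (1-\epsilon)n$.

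Combining the two bounds gives $|A|+|B| < \epsilon n/2 + (1-\epsilon)n = (1-\epsilon/2)n$, which is the desired conclusion. The step I expect to require the most care is the second bound on $|B|$: a naive estimate like $|B| \le (1-\sqrt{\epsilon/2})\,n$ (obtained by simply dropping the linear $|B|\epsilon n/4$ term in the upper bound on $|E_B|$) is too weak to combine with $|A| < \epsilon n/2$ when $\epsilon$ is close to $1/2$, so the quadratic inequality in $m$ has to be treated sharply enough to extract the clean threshold $(1-\epsilon)n$.
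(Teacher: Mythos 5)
Your proof is correct and follows essentially the same route as the paper: the same partition into the two low-degree classes plus $S$, the same count of the edges between those two classes, and the same bound on the total number of edges of one colour via that colour's low-degree class. The only real difference is that you run the argument directly (bounding $\min(|A|,|B|)<\epsilon n/2$ and $\max(|A|,|B|)<(1-\epsilon)n$ separately), whereas the paper assumes $|S|<\frac{\epsilon}{2}n$ and derives a contradiction.
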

\begin{proof}
Assume for contradiction that $|S| <\frac{\epsilon}{2} n$. Note that $\epsilon \leq 1/2$ since each color has density at
least $\epsilon$. Let $R
\subset V(K_n)$ be those vertices with blue degree less than $\frac{\epsilon}{4}n$,
and $B \subset V(K_n)$ be those vertices with red degree less than $\frac{\epsilon}{4}n$. Then $V(K_n)=S
\cup R \cup B$ is a partition of the vertex set. Assume without loss
of generality that $|R| \geq |B|$, so $|R| \geq \frac{1}{2}(n-|S|) > \frac{3}{8}n$. The number of edges between $R$
and $B$ of color red is less than $\frac{\epsilon}{4} n |B|$ since
each vertex in $B$ has red degree less than $\frac{\epsilon}{4} n$.
However, the number of red edges between $R$ and $B$ is
greater than $\left(|B|-\frac{\epsilon}{4}n\right)|R|$ since every vertex in
$R$ has blue degree less than $\frac{\epsilon}{4}n$. Hence,
$$\frac{\epsilon}{4} n|B|>\left(|B|-\frac{\epsilon}{4} n\right)|R|>\left(|B|-\frac{\epsilon}{4}n\right)\frac{3}{8}n.$$
Rearranging the terms and using $\epsilon \leq 1/2$, we have
$$\frac{\epsilon}{4}n \cdot \frac{3}{8}n> |B|\left(\frac{3}{8}n-\frac{\epsilon}{4}n\right)\geq |B|\frac{n}{4}.$$
Multiplying both sides by $4/n$, we have $|B| < \frac{3\epsilon}{8}n$.
We now give an upper bound on the number of edges of color blue. Each vertex in
$R$ has degree at most $\frac{\epsilon}{4} n$ in color blue, so
these vertices participate in at most $\frac{\epsilon}{4}n|R|$ blue
edges. The number of remaining edges, those contained in $B \cup S$,
is ${|B|+|S| \choose 2}$. Let $x=|B|+|S|$, so $x < \epsilon n$. Hence, the number of edges of color blue
is at most
$$\frac{\epsilon}{4}n|R|+{|B|+|S| \choose
2}=\frac{\epsilon}{4}n(n-x)+{x \choose
2}<\frac{\epsilon}{4}n^2+\frac{\epsilon^2}{4}n^2
\leq \epsilon {n \choose 2},$$ contradicting that the blue edge density
is at least $\epsilon$ and completing the proof.
\end{proof}

The following lemma is a variant of the results in \cite{KoRo} and
\cite{Su}. Its proof uses a probabilistic argument commonly referred
to as dependent random choice, which appears to be a powerful tool
in proving various results in Ramsey theory (see, e.g., \cite{FoSu}
and its references).

\begin{lemma}\label{drc}
For a red-blue edge coloring of $K_n$, let $S$ denote the vertex
subset $S \subset V(K_n)$ such that every vertex in $S$ has degree
at least $\alpha n $ in each color, and $s=|S|$. If $\beta\leq
s^{-k/h}$, then there is a subset $T \subset S$ with $|T| \geq
\frac{1}{2}\alpha^h(1-\alpha)^h s - 2$ such that every $k$-tuple in
$T$ has at least $\beta n$ common neighbors in each color.
\end{lemma}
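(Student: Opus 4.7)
The plan is to prove Lemma \ref{drc} by a two-color variant of the dependent random choice technique. The classical single-color version samples $h$ random vertices, takes their common neighborhood in one color, and then deletes a small set of vertices to kill off every $k$-tuple whose common neighborhood is too small. To handle both colors at once, I would sample two independent batches of $h$ vertices, one dedicated to each color.

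Concretely, I would pick $u_1,\dots,u_h$ and $w_1,\dots,w_h$ independently and uniformly from $V(K_n)$, and form
\[
A=\bigcap_{i=1}^{h} N_R(u_i),\qquad B=\bigcap_{i=1}^{h} N_B(w_i),\qquad U=A\cap B.
\]
For each $v\in S$ the independence of the samples gives $\Pr[v\in U]=(d_R(v)/n)^{h}(d_B(v)/n)^{h}$. The key observation is that the balanced-degree condition $d_R(v),d_B(v)\geq\alpha n$ together with $d_R(v)+d_B(v)=n-1$ forces the product $d_R(v)d_B(v)$ to attain its minimum on the boundary of the feasible region, giving $d_R(v)d_B(v)\geq\alpha(1-\alpha)n^{2}-O(n)$. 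Consequently $\Pr[v\in U]\geq(\alpha(1-\alpha))^{h}=\alpha^{h}(1-\alpha)^{h}$ up to lower-order terms, and summing over $v\in S$ yields $\mathbb{E}[|U\cap S|]\geq\alpha^{h}(1-\alpha)^{h}s$.

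Next, I would call a $k$-subset $K\subset V(K_n)$ \emph{bad} if $|N_R(K)|<\beta n$ or $|N_B(K)|<\beta n$. For $K\subset U$ we have $K\subset A$ and $K\subset B$, and by independence
\[
\Pr[K\subset U]=\left(\frac{|N_R(K)|}{n}\right)^{h}\left(\frac{|N_B(K)|}{n}\right)^{h}\leq\beta^{h}
\]
whenever $K$ is bad in at least one color. Hence the expected number of bad $k$-tuples inside $U\cap S$ is at most $\binom{s}{k}\beta^{h}\leq s^{k}\beta^{h}\leq 1$ by the hypothesis $\beta\leq s^{-k/h}$. Linearity of expectation then produces an outcome in which $|U\cap S|$ exceeds the number of bad $k$-tuples it contains by at least $\alpha^{h}(1-\alpha)^{h}s-1$, and deleting one vertex from each bad $k$-tuple inside $U\cap S$ yields the required $T$. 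The mild slack (the factor $\tfrac12$ and the $-2$) in the claimed bound comfortably absorbs lower-order losses from sample coincidences and the conversion of an expectation into a single deterministic realization.

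The main difficulty, and the reason the lemma restricts attention to $v\in S$, is keeping the expected mass of $U\cap S$ bounded away from zero. If a vertex had $d_R(v)$ close to $0$, the factor $(d_R(v)/n)^{h}$ would collapse $\Pr[v\in U]$ to essentially $0$ and $U$ could be disjoint from that vertex with near-certainty. It is exactly the assumption $v\in S$ (so that $d_R(v),d_B(v)\geq\alpha n$) that pins $d_R(v)d_B(v)\geq\alpha(1-\alpha)n^{2}$ and produces the $\alpha^{h}(1-\alpha)^{h}$ factor, strictly better than the crude bound $\alpha^{2h}$ one would get without exploiting $d_R(v)+d_B(v)=n-1$. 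On the bad-tuple side there is nothing essentially two-colored to do: a bad tuple is bad in some specific color, and the batch of $h$ samples reserved for that color is already enough to produce the $\beta^{h}$ tail bound, which is why only $h$ (and not $2h$) appears in the hypothesis on $\beta$.
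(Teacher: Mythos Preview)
Your proposal is correct and follows essentially the same dependent random choice argument as the paper: two independent batches of $h$ random vertices, one per color, give a set $U\cap S$ of expected size roughly $\alpha^h(1-\alpha)^h s$, and the hypothesis $\beta\le s^{-k/h}$ bounds the expected number of bad $k$-tuples, after which one deletion per bad tuple yields $T$. The paper separates the red-bad and blue-bad counts (hence the $-2$) and makes the lower-order correction $(1-2/n)^h\ge\tfrac12$ explicit (hence the $\tfrac12$), but the structure is identical to yours.
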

\begin{proof}
We may assume $\alpha \leq 1/2$ and $n \geq 4^{h+1}$, since otherwise we can let $T$ be the empty set.
Let $U_1=\{x_1,\ldots,x_h\}$, $U_2=\{y_1,\ldots,y_h\}$ be two
subsets of $h$ vertices from $V(K_n)$ chosen uniformly at random with
repetitions, and let $W=N_R(U_1) \cap N_B(U_2) \cap S$. That is, $W$
is the set of vertices of $S$ that are adjacent in red to every
vertex in $U_1$ and adjacent in blue to every vertex in $U_2$. Since each vertex $v \in S$ has degree at least
$\alpha n$ in each color and $\alpha \leq 1/2$, then
$$|N_R(v)||N_B(v)| \geq \alpha n \left((1-\alpha)n-1\right) \geq \alpha(1-\alpha)(1-2/n)n^2$$
for each $v \in S$.
Therefore, using $n \leq 4^{h+1}$, we have
\begin{eqnarray*}\mathbb{E}[|W|] & = & \sum_{v \in S}{\bf Pr}(v \in N_R(U_1)){\bf Pr}(v \in N_B(U_2))=\sum_{v \in
S}\left(\frac{|N_R(v)|}{n}\right)^h\left(\frac{|N_B(v)|}{n}\right)^h
\\ & \geq & \alpha^{h}(1-\alpha)^{h}(1-2/n)^hs \geq \frac{1}{2}\alpha^{h}(1-\alpha)^{h}s.\end{eqnarray*}

The probability that a given set $D\subset S$ of vertices is adjacent to $U_1$ by red edges is
$\left(\frac{|N_R(D)|}{n}\right)^h$. Let $Z$ denote the number of
$k$-tuples in $W$ with less than $\beta n$ common red neighbors. So
$$\mathbb{E}[Z]=\sum_{D \subset S, |D|=k,|N(D)|<\beta n} {\bf Pr}(D
\subset N_R(U_1)) \leq {s \choose k}\beta^h \leq s^k \beta^h \leq
1.$$ We similarly have that the expected number, which we denote by
$Z'$, of $k$-tuples in $W$ with less than $\beta n$ common blue
neighbors is at most $1$.

Hence, the expectation of $|W|-Z-Z'$ is at least
$\frac{1}{2}\alpha^h(1-\alpha)^h s-2$ and thus there are choices for
$U_1$ and $U_2$ such that the corresponding value of $|W|-Z-Z'$ is
at least $\frac{1}{2}\alpha^h(1-\alpha)^h s - 2$. Delete a vertex
from every $k$-tuple $D \subset W$ with less than $\beta n$ red
common neighbors or with less than $\beta n$ blue common neighbors.
Letting $T$ be the resulting set, it is clear that $T$ has the
desired properties, completing the proof.
\end{proof}

Having finished all the necessary preparations, we are now ready to prove our first theorem.
Recall that the Ramsey number $R(k)$ is the least positive integer $n$ such that every red-blue edge coloring of the complete graph
$K_n$ contains a monochromatic $K_k$.  We will use the simple bound $R(k) < 4^k$ which was mentioned in the
introduction.

\vspace{0.2cm}
\noindent {\bf Proof of Theorem \ref{main}:} Consider a red-blue edge coloring
of $K_n$ with $n \geq (16/\epsilon)^{2k+1}$ in which both colors
have density at least $\epsilon$. By Lemma \ref{start}, there is a
vertex subset $S$ with $|S|\geq \frac{\epsilon}{2}n$ in which
every vertex has degree at least $\frac{\epsilon}{4}n$ in each
color. Apply Lemma \ref{drc} with $\alpha=\frac{\epsilon}{4}$,
$\beta=\frac{R(k)}{n}$, $h=2k$, and $s=|S|\geq \frac{\epsilon}{2}n$.
We can apply this lemma since
$$\beta=\frac{R(k)}{n}<4^k/n< n^{-1/2}\leq s^{-k/h}.
$$
 So there is a subset $T \subset S$ with
 $$|T| \geq \frac{1}{2}\left(\alpha(1-\alpha)\right)^{2k}s-2 \geq \left(\frac{\epsilon}{8}\right)^{2k+1}n \geq 4^k \geq
 R(k)$$
and every $k$-tuple $D \subset T$ has at least $\beta n=R(k)$ common
neighbors in each color. Since $|T| \geq R(k)$, there is a
monochromatic $k$-clique $D$ in $T$. Assume without loss of
generality that the color of this monochromatic $k$-clique is red.
By our construction of the set $T$, $|N_B(D)| \geq \beta n=R(k)$, so there is a monochromatic
$k$-clique $X$ in $N_B(D)$. This gives us a coloring from ${\cal F}_k$, since
the $k$-clique $D$ is red,
the edges between $D$ and $X$ are blue, and $X$ is also monochromatic, completing the proof.
 \qed

Next we show that the bound in Theorem \ref{main} is tight up to the constant factor in the exponent.

\begin{proposition}\label{probprop}
If $\epsilon \leq 1/2$ and $k \geq 2$, then $n(k,\epsilon)> \epsilon^{-(k-1)/2}$.
\end{proposition}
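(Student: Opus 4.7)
My plan is to exhibit, for $n = \lfloor \epsilon^{-(k-1)/2}\rfloor$, a $2$-edge-coloring of $K_n$ with both color densities at least $\epsilon$ and no member of $\mathcal{F}_k$; such a coloring gives $n(k,\epsilon) \geq n+1 > \epsilon^{-(k-1)/2}$.

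For $k\geq 3$ I would first make a reduction: every member of $\mathcal{F}_k$ either contains a monochromatic red $K_k$, or else consists of two vertex-disjoint blue $K_k$'s with all $k^2$ between-edges red. Indeed, if the distinguished color forming $K_k$ or $2K_k$ is red, we have a red $K_k$; if blue forms a single $K_k$, the remaining $k$ vertices carry a red $K_k$; only when blue forms $2K_k$ is there no red $K_k$ (the red bipartite graph $K_{k,k}$ contains no $K_k$ for $k\geq 3$), and then the cross-edges are forced to be red. It therefore suffices to build a coloring with neither a red $K_k$ nor such a ``double blue clique'' pattern. I would take a uniformly random coloring with exactly $m=\lceil\epsilon\binom{n}{2}\rceil$ red edges (the $G(n,m)$ model), which automatically gives red density $\geq\epsilon$ and, for $\epsilon$ bounded away from $1/2$, blue density $\geq\epsilon$. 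Since the probability that any specified $\ell$-edge set is red is at most $(m/\binom{n}{2})^\ell\leq (1+o(1))\epsilon^\ell$, routine first-moment bounds give $\mathbb{E}[\#\text{red }K_k]\leq\binom{n}{k}\epsilon^{\binom{k}{2}}(1+o(1))\leq(1+o(1))/k!$ (using $n\leq\epsilon^{-(k-1)/2}$) and $\mathbb{E}[\#\text{double blue cliques}]\leq\tfrac{1}{2}\binom{n}{2k}\binom{2k}{k}\epsilon^{k^2}(1+o(1))\leq(1+o(1))\epsilon^k/(2(k!)^2)$; these sum to well below $1$ for $k\geq 3$ and $\epsilon\leq 1/2$, so with positive probability the random coloring works.

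The main obstacle is $k = 2$, where ``no red $K_2$'' means no red edges, which contradicts the density hypothesis, so the above reduction collapses. For this case I would use an explicit construction: color a single star $K_{1,n-1}$ red and the rest blue. Red density is $2/n\geq\epsilon$ when $n\leq 2/\epsilon$ (which holds at $n=\lceil\epsilon^{-1/2}\rceil$ since $\sqrt{\epsilon}\leq 2$), blue density is $(n-2)/n\geq 1/2$, and every induced $4$-vertex subgraph has red part either empty (blue is $K_4$) or $K_{1,3}$ (blue is $K_3$) --- never $K_2$ or $2K_2$ in either color --- so it is not in $\mathcal{F}_2$. The remaining boundary, $\epsilon$ close enough to $1/2$ that the $G(n,m)$ density argument fails, lies in the trivial regime $\epsilon^{-(k-1)/2}<2k$ where $n(k,\epsilon)\geq 2k$ is automatic.
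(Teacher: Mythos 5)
Your proof is correct and follows essentially the same route as the paper: a uniformly random red graph with exactly $\lceil\epsilon\binom{n}{2}\rceil$ edges, the reduction that every member of $\mathcal{F}_k$ contains either a red $K_k$ or a red $K_{k,k}$, and a first-moment bound on both counts. The one place you go beyond the paper is $k=2$, and rightly so: there the paper's bound on the expected number of red $K_2$'s reads $m<1/2$, which is incompatible with the red graph having positive density, so the probabilistic argument genuinely degenerates; your explicit red star $K_{1,n-1}$ (together with the trivial regime $\epsilon^{-(k-1)/2}<2k$) is a clean and necessary fix.
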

\begin{proof}
Consider a red-blue edge coloring of $K_n$ with $n =\epsilon^{-(k-1)/2}$, where the red graph is
taken uniformly at random from all labeled graphs with $n$ vertices and $m=\epsilon{n \choose 2}$ edges.
Since
$\epsilon \leq 1/2$, then both the red graph and the blue graph each have edge density at least
$\epsilon$. Note also, that for any collection of $\ell$ edges of $K_n$ the probability that the red
graph contains all these edges is
$$\frac{{{n \choose 2}-\ell \choose m -\ell}} {{{n \choose 2} \choose m}}=
\prod_{i=0}^{\ell-1}\frac{m-i}{{n \choose 2}-i}\leq \left(\frac{m}{{n \choose 2}}\right)^\ell=
\epsilon^\ell.$$

Therefore any copy of a complete bipartite graph with parts of size $k$ (and with $k^2$ edges),
has probability at most $\epsilon^{k^2}$ of being  red. By linearity of
expectation, the expected number of red copies of the complete bipartite graph with parts of size $k$ is
at most $$\frac{1}{2}{n \choose k}{n-k \choose k}\epsilon^{k^2} \leq \frac{n^{2k}}{2k!^2}\epsilon^{k^2}
\leq \frac{\epsilon^k}{2k!^2} \leq \frac{1}{32}.$$ Similarly, the expected number of red cliques with
$k$ vertices is at most $${n \choose k}\epsilon^{{k \choose 2}} < \frac{n^k}{k!}\epsilon^{{k \choose 2}}
= \frac{1}{k!} \leq \frac{1}{2}.$$

Therefore, there is a red-blue edge coloring of $K_n$ with each color having edge density at least $\epsilon$, with
no monochromatic red clique of order $k$, and with no monochromatic red complete bipartite graph with parts of size $k$.
In particular, this edge coloring of $K_n$ does not contain a member of $\mathcal{F}_k$, which implies that
$n(k,\epsilon)>n=\epsilon^{-(k-1)/2}$.
\end{proof}

\section{Proof of Theorem \ref{tournamentthm}} \label{section3}

We start this section with a lemma that shows that every tournament has many vertices
with large outdegree and large indegree.

\begin{lemma}\label{firsttournamentlemma}
Every tournament $T$ on $n \geq 2$ vertices has more than $\frac{n}{3}-2$ vertices with outdegree and indegree at least $n/6$.
\end{lemma}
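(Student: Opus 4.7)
The plan is to partition the vertices of $T$ into three groups by "badness" and bound the size of each bad group by a simple double-counting argument. Specifically, let $A$ be the set of vertices with outdegree strictly less than $n/6$, let $B$ be the set of vertices with indegree strictly less than $n/6$, and let $G = V(T) \setminus (A \cup B)$ be the set of "good" vertices. It suffices to show that $|G| > \frac{n}{3} - 2$, i.e., that $|A| + |B| < \frac{2n}{3} + 2$.

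First I would observe that $A$ and $B$ are disjoint for $n \geq 2$: a vertex $v \in A \cap B$ would satisfy $d^+(v) + d^-(v) < n/3$, but in a tournament we always have $d^+(v) + d^-(v) = n-1 \geq n/3$. So $|A \cup B| = |A| + |B|$, and it is enough to bound $|A|$ and $|B|$ separately.

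The main step is to bound $|A|$ (the bound on $|B|$ is symmetric). I would look at the sub-tournament induced on $A$: it contains exactly $\binom{|A|}{2}$ edges, and the sum of out-degrees inside $A$ is $\binom{|A|}{2}$. On the other hand, every vertex of $A$ has outdegree less than $n/6$ in the whole tournament, so its outdegree restricted to $A$ is also less than $n/6$. Summing this bound over $A$ gives
\[
\binom{|A|}{2} \;=\; \sum_{v \in A} d_A^+(v) \;\leq\; \sum_{v \in A} d^+(v) \;<\; |A|\cdot \frac{n}{6},
\]
which simplifies to $|A| - 1 < n/3$, i.e., $|A| < n/3 + 1$. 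Applying the same counting to indegrees within $B$ yields $|B| < n/3 + 1$.

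Combining these bounds, $|G| = n - |A| - |B| > n - 2(n/3 + 1) = n/3 - 2$, as required. There is no real obstacle here; the only subtlety is the disjointness of $A$ and $B$, which lets us simply add the two individual bounds. The whole argument is a one-line application of the identity that the out-degrees in any tournament on $m$ vertices sum to $\binom{m}{2}$.
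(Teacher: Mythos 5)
Your proof is correct and uses the same key idea as the paper: the out-degrees in the subtournament induced on $A$ sum to $\binom{|A|}{2}$, so $A$ cannot be too large if every vertex of $A$ has outdegree below $n/6$. The paper phrases this as a contradiction (WLOG taking the larger of $A$, $B$ and finding a vertex of outdegree at least $(|A|-1)/2 \geq n/6$), while you bound $|A|$ and $|B|$ directly, but the argument is essentially identical.
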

\begin{proof}
Let $A$ be the set of vertices of $T$ with outdegree less than $n/6$ (and hence indegree more than $5n/6-1$),
$B$ be the set of vertices of $T$ with indegree less than $n/6$ (and hence outdegree more than $5n/6-1$), and $C$ be the
set of vertices of $T$ with outdegree and indegree at least $n/6$.
The sets $A,B,C$ partition the vertex set of $T$, so $|A|+|B|+|C|=n$. Assume for contradiction that
$|C| \leq \frac{n}{3}-2$, so $|A|+|B|\geq\frac{2n}{3}+2$.
We may assume without loss of generality that $|A| \geq |B|$, so $|A| \geq \frac{n}{3}+1$. By averaging, the tournament
on $A$ contains a vertex with outdegree at least $\frac{|A|-1}{2} \geq \frac{n}{6}$. This contradicts the definition of $A$, and completes
the proof.
\end{proof}

Let $F(n,t)$ denote the minimum $m$ such that every tournament with $n$ vertices and
at most $t$ directed triangles can be made transitive by changing the direction of at most $m$ edges. We prove the following
recursive bound on $F(n,t)$.

\begin{lemma}\label{astep}
For $n \geq 12$ and $t$, there are $n_1,n_2$ with $n_1+n_2 =n-1$ and $n_1,n_2 \leq \frac{5n}{6}-1$ and $t_1,t_2$ with
$t_1+t_2 \leq t$ such that \begin{equation} \label{useeq}
F(n,t) \leq \frac{18t}{n}+F(n_1,t_1)+F(n_2,t_2).\end{equation}
\end{lemma}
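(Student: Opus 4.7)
The plan is to identify a single vertex $v$ whose removal splits the tournament into two balanced pieces, and whose few directed triangles pay for the edges we need to reverse. Specifically, I will pick $v$ so that (a) both $N^+(v)$ and $N^-(v)$ are reasonably large, and (b) $v$ lies in few directed triangles of $T$; then reversing the edges between $N^+(v)$ and $N^-(v)$ that create a triangle through $v$ will place $v$ between these two sets in a prospective transitive order, reducing the problem to making the two induced subtournaments transitive.

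First I would find such a vertex by averaging. The sum over $v\in V(T)$ of the number of directed triangles containing $v$ is exactly $3t'$, where $t'\le t$ is the total number of directed triangles, so this sum is at most $3t$. By Lemma \ref{firsttournamentlemma}, the set $C$ of vertices with indegree and outdegree at least $n/6$ satisfies $|C|>n/3-2$, and since $n\ge 12$ gives $n/3-2\ge n/6$, we have $|C|>n/6$. By averaging over $C$, some $v\in C$ is contained in at most $3t/|C|<18t/n$ directed triangles. Set $n_1=|N^+(v)|$ and $n_2=|N^-(v)|$; then $n_1+n_2=n-1$ and, because each is at least $n/6$, both are at most $n-1-n/6=5n/6-1$, as required.

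Next I would exploit the bijection between directed triangles through $v$ and the edges from $N^+(v)$ to $N^-(v)$: such a triangle must enter $v$ once and leave once, so it has the form $v\to w\to u\to v$ with $w\in N^+(v)$, $u\in N^-(v)$, and edge $w\to u$. Let $E$ be this set of edges; then $|E|\le 18t/n$. Reverse every edge of $E$. After this modification, every edge between $N^+(v)$ and $N^-(v)$ points from $N^-(v)$ to $N^+(v)$, so the ordering ``$N^-(v)$, then $v$, then $N^+(v)$'' will extend any transitive orderings of the two pieces into a transitive ordering of all of $T$. Moreover, reversing edges of $E$ does not affect the induced tournaments on $N^+(v)$ and $N^-(v)$, so letting $t_1,t_2$ be the numbers of directed triangles in the tournaments on $N^+(v),N^-(v)$, respectively, we get $t_1+t_2\le t$ because any triangle counted by $t_1$ or $t_2$ avoids $v$ and hence is a triangle of $T$.

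The conclusion then follows: after reversing $|E|\le 18t/n$ edges, we pay at most $F(n_1,t_1)$ edge reversals to make the tournament on $N^+(v)$ transitive and $F(n_2,t_2)$ for the one on $N^-(v)$, giving the claimed bound \eqref{useeq}. The only real care point is the numerical calibration ensuring $|C|\ge n/6$ (so that the averaging yields the desired $18t/n$ bound) together with $n_1,n_2\le 5n/6-1$; both are delivered by Lemma \ref{firsttournamentlemma} once $n\ge 12$, so there is no substantive obstacle beyond bookkeeping.
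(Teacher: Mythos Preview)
Your proof is correct and follows essentially the same approach as the paper: pick a vertex $v$ in the set $C$ supplied by Lemma~\ref{firsttournamentlemma} that lies in few directed triangles (by averaging), reverse the edges between its out- and in-neighborhoods that form triangles through $v$, and recurse on the two pieces. The only differences are cosmetic (you write $N^+(v),N^-(v)$ where the paper writes $O,I$, and you assign $n_1,n_2$ in the opposite order), and you spell out a couple of bookkeeping points (e.g.\ why $t_1+t_2\le t$) that the paper leaves implicit.
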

\begin{proof}
Let $T$ be a tournament on $n$ vertices with at most $t$ directed triangles. By Lemma \ref{firsttournamentlemma},
there is a vertex subset $C$ of tournament $T$ of size at least $\frac{n}{3}-2 \geq \frac{n}{6}$ such that every vertex in
$C$ has indegree and outdegree at least $n/6$. By averaging, there is a vertex $v \in C$ that is in at most
$3t/|C| \leq 18t/n$ directed triangles. Let $I$ be the set of inneighbors of $v$
and $O$ be the set of outneighbors of $v$. Then $|I|+|O|=n-1$, $|I|,|O| \geq n/6$, and
 the number of edges from $O$ to $I$ is the number of directed triangles containing $v$, which is at most $18t/n$.

By reversing the direction of all edges directed from $O$ to $I$, and letting $n_1=|I|$, $n_2=|O|$, $t_1$ denote the number of directed triangles whose vertices are all contained in $I$,
and $t_2$ denote the number of directed triangles whose vertices are all contained in $O$, we have established (\ref{useeq}).
\end{proof}

Note that every tournament on $n$ vertices can be made transitive by changing the direction of at most half of its edges.
So for all $n$ and $t$, we have
\begin{equation}\label{secondeqtour}
F(n,t) \leq {n \choose 2}/2 < n^2/4.
\end{equation}

The following statement implies Lemma \ref{farmany}.

\begin{lemma}
For all $n$ and $t$ we have $F(n,t) \leq 27(nt)^{1/2}$.
\end{lemma}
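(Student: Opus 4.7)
The plan is to prove the bound by strong induction on $n$, combining the recursion from Lemma \ref{astep} with the trivial bound $F(n,t) \leq n^2/4$ of (\ref{secondeqtour}). Small $n$ (say $n \leq 81$) serves as the base case: the trivial bound satisfies $n^2/4 \leq 27\sqrt{nt}$ exactly when $t \geq n^3/11664$, and this threshold meets $9n/16$ precisely at $n = 81$, so together with the inductive argument below this handles every $t$ once $n \leq 81$.

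For the inductive step $n \geq 82$, applying Lemma \ref{astep} yields
$$F(n,t) \leq \frac{18t}{n} + F(n_1,t_1) + F(n_2,t_2),$$
where $n_1+n_2 = n-1$, $n_i \leq 5n/6-1$, and $t_1+t_2 \leq t$. Using the induction hypothesis on each child together with Cauchy--Schwarz,
$$\sqrt{n_1t_1}+\sqrt{n_2t_2} \leq \sqrt{(n_1+n_2)(t_1+t_2)} \leq \sqrt{(n-1)t},$$
which gives $F(n,t) \leq 18t/n + 27\sqrt{(n-1)t}$. For $t \leq 9n/16$ this is $\leq 27\sqrt{nt}$ after the one-line estimate $18t/n \leq 27\sqrt{t}/(\sqrt{n}+\sqrt{n-1})$, while for $t \geq n^3/11664$ the trivial bound already delivers the claim.

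The main obstacle is the middle range $9n/16 < t < n^3/11664$, which is nonempty once $n > 81$ and in which neither of the two estimates above is by itself tight enough. I would close this by sharpening the recursion: if the distinguished vertex $v$ has exactly $k \leq 18t/n$ directed triangles through it, then reversing the $O \to I$ edges destroys not only these $k$ triangles but also every mixed triangle with vertices on both sides of $v$, so in fact $t_1+t_2 \leq t-k$. This produces the refined bound $F(n,t) \leq k + 27\sqrt{(n-1)(t-k)}$; the right side is concave in $k$, and its maximum over $k \in [0,18t/n]$ sits well below $27\sqrt{nt}$ for a much wider range of $t$ than the original estimate. Checking that this widened inductive regime meets the trivial regime $t \geq n^3/11664$ for every $n$ is the delicate calculation that completes the proof.
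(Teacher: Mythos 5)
Your reduction of the problem to the range $9n/16 < t < n^3/11664$ is accurate, but the refinement you propose to close that range does not work, and this middle range is exactly where the content of the lemma lies. The improvement $t_1+t_2\le t-k$ is itself correct (the triangles counted by $t_1,t_2$ avoid $v$, while the $k$ reversed edges are in bijection with the directed triangles through $v$), but the resulting bound $k+27\sqrt{(n-1)(t-k)}$ must be checked for \emph{every} admissible $k\le 18t/n$, since Lemma \ref{astep} only guarantees the upper bound on $k$ and a worst-case tournament may realize it. Take $n>11664$ and $t=n^2$, which then lies strictly inside the middle range, and $k=18t/n=18n$. Using $1-\sqrt{1-x}\le x$,
$$27\sqrt{nt}-27\sqrt{(n-1)(t-k)}=27n^{3/2}\left(1-\sqrt{1-19/n+18/n^2}\right)\le 27\cdot\frac{19}{n}\cdot n^{3/2}=513\sqrt{n},$$
so the saving in the square-root term is only $O(\sqrt n)$ while the additive cost is $18n$; the refined inductive step therefore exceeds the target $27\sqrt{nt}$ by at least $18n-513\sqrt n>0$. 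The obstruction is structural, not a matter of constants: an induction on the target $27\sqrt{nt}$ can absorb at most $27\bigl(\sqrt{nt}-\sqrt{(n-1)t}\bigr)=\Theta(\sqrt{t/n})$ of additive cost per level, whereas one level costs $\Theta(t/n)$, and these are comparable only when $t=O(n)$. No local modification of the single-step recursion of the kind you describe changes this mismatch.

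The paper's proof avoids inducting on the stated bound. It runs the recursion of Lemma \ref{astep} globally, stopping a branch only when its piece has size at most $36(t/n)^{1/2}$ with $n,t$ the \emph{original} parameters, and then does an amortized accounting: each directed triangle $D$ is charged $18/|W|$ for every set $W$ of the recursion tree containing it, and since the sizes along a root-to-leaf path shrink by a factor of at least $5/6$ per step and never fall below $6(t/n)^{1/2}$ before the stopping rule triggers, the charge per triangle telescopes to a geometric series summing to at most $18(n/t)^{1/2}$. This bounds the total number of reversals across the tree by $18\sqrt{nt}$; the leaves are then finished with the trivial quadratic bound, which the stopping threshold is calibrated to make cost at most $9\sqrt{nt}$ in total. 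If you want a genuinely inductive write-up, you would have to strengthen the induction hypothesis to carry this global information, which amounts to reproducing the amortized argument.
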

\begin{proof}
Let $T$ be a tournament with $n$ vertices and at most $t$ directed triangles. For a vertex subset $W \subset V(T)$, let
$t(W)$ denote the number of directed triangles contained in $W$.

Consider the following recursive procedure. At step $i$, we will have a partition $\mathcal{F}_i$ of $V(T)$
into disjoint sets, a tournament $T_i$, and a linear order on these sets such that all edges between different subsets in $\mathcal{F}_i$
are directed forward in $T_i$. Let $T_0=T$ and $\mathcal{F}_0=\{V(T)\}$. For all $W \in \mathcal{F}_i$,
 if $|W| \leq 36(t/n)^{1/2}$, then put $W \in \mathcal{F}_{i+1}$. Otherwise,
 we find a partition $W=W_1 \cup W_2$ of $W$ into two subsets each of cardinality at least $|W|/6 > 6(t/n)^{1/2}$
 according to Lemma \ref{astep} and change the direction of at most $18t(W)/|W|$ edges in $T_i$
 to make all edges between $W_1$ and $W_2$ directed from $W_1$ to $W_2$, update the linear ordering by placing $W_1$
 immediately before $W_2$ in the same place $W$ was, and put $W_1,W_2 \in \mathcal{F}_{i+1}$.
It is clear that the resulting partition $\mathcal{F}_{i+1}$ of $V(T)$ and tournament $T_{i+1}$
will also satisfy that there is a linear order on the sets in $\mathcal{F}_{i+1}$
such that all edges between different subsets in $\mathcal{F}_{i+1}$ are directed forward in $T_{i+1}$. Eventually, there is a step $j$
such that each set in the partition $\mathcal{F}_j$ has cardinality at most $36(t/n)^{1/2}$. We let $\mathcal{F}=\mathcal{F}_j$ and
$T^*=T_j$.

Following this procedure, we build a {\em tree} $\cal{T}$ of subsets of $V(T)$,
with $V(T)$ being its root, so that $W \in \mathcal{F}_{i}$ is a leaf of $\cal{T}$ if $|W| \leq 36(t/n)^{1/2}$ and
otherwise $W$ has two children $W_1,W_2$ as described above. Notice that $\mathcal{F}$ consists of the leaves of $\cal{T}$.

The vertices of $\cal{T}$ that contain any particular directed triangle in
$T$ form a path in $\cal{T}$ starting from $V(T)$. For each directed triangle $D$ in $T$, define its weight
$$w(D)=\sum_{D \subset W, W \in V(\cal{T})} 18/|W|.$$
Recall that if $W$ was not a leaf of $\cal{T}$, then we changed the
direction of at most $18t(W)/|W|$ edges between $W_1$ and $W_2$. So
a particular directed triangle $D$ in $W$ contributes at most
$18/|W|$ to the number of edges between $W_1$ and $W_2$ that are
changed. Hence, the sum of the weights of the directed triangles in
$T$ is an upper bound on the number of edges whose direction is
changed when creating $T^*$ from $T$. If $W^0=V(T),\ldots,W^i$ are
the vertices of the path in $\cal{T}$ that contain $D$, then
$$w(D)=18/|W^i|+18/|W^{i-1}|+\cdots+18/|W^0| \leq \frac{18}{6(t/n)^{1/2}}\left(1+5/6+(5/6)^2+\cdots \right)=18(n/t)^{1/2}.$$
So we changed at most $18(n/t)^{1/2}t=18(tn)^{1/2}$ edges to obtain $T^*$ from $T$.

The partition $\mathcal{F}$ and tournament $T^*$ satisfy that each set in $\mathcal{F}$
has cardinality at most $36(t/n)^{1/2}$ and there is a linear order of the sets in $\mathcal{F}$ such that all edges between
different sets in $\mathcal{F}$ are directed forward in $T^*$.
 Using the bound (\ref{secondeqtour}), for each $W \in \mathcal{F}$, we can change the direction of less than
 $|W|^2/4$ edges to make $W$ transitive. Since $|W| \leq 36(t/n)^{1/2}$ for each $W \in \mathcal{F}$, the number of edges
 in sets in $\mathcal{F}$ whose direction in $T^*$ is changed is at most $$\sum_{W \in \mathcal{F}}|W|^2/4 \leq \frac{n}{36(t/n)^{1/2}}(36(t/n)^{1/2})^2/4=9(tn)^{1/2},$$
 where we used that if $\sum x_i=N$ and $0 \leq x_i \leq a$ for each $x_i$, then $\sum x_i^2 \leq (N/a)a^2$.
The resulting tournament is clearly transitive, and we have changed
the direction of at most $(18+9)(tn)^{1/2}=27(tn)^{1/2}$ edges of $T$ to obtain this tournament.
\end{proof}

We will also need the following lemma, which is a variant of Lemma \ref{drc}.

\begin{lemma}\label{drc2} Let $H$ be a bipartite graph with parts $V_1$ and $V_2$ each of size $n$ and
with at least $\alpha n^2$ edges. If $\beta \leq n^{-d/h}$, then there is a subset $W_1 \subset V_1$
such that $|W_1| \geq \alpha^h n-1$ and every $d$-tuple in $W_1$ has at least $\beta n$ common
neighbors. \end{lemma}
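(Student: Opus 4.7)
The plan is to prove Lemma \ref{drc2} by a dependent random choice argument, paralleling the proof of Lemma \ref{drc} but working with just one random multiset (since we only need a single color class of common neighbors). Concretely, I would pick a multiset $U = \{u_1, \ldots, u_h\} \subset V_2$ uniformly at random with repetitions, and define $W = \{v \in V_1 : v \text{ is adjacent in } H \text{ to every } u_i\}$. The goal is to show that with positive probability $W$ is large and contains few $d$-tuples with small common neighborhood; the final set $W_1$ will be $W$ with one vertex removed from each such bad $d$-tuple.

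First I would bound $\mathbb{E}[|W|]$ from below. For each $v \in V_1$, the probability $v \in W$ equals $(|N(v)|/n)^h$, where $N(v) \subset V_2$. Since $\sum_{v \in V_1} |N(v)| \geq \alpha n^2$, the average of $|N(v)|/n$ over $v \in V_1$ is at least $\alpha$, so by convexity of $x \mapsto x^h$,
\[
\mathbb{E}[|W|] = \sum_{v \in V_1}\left(\frac{|N(v)|}{n}\right)^{h} \geq n \cdot \alpha^{h}.
\]
Next I would bound the expected number $Z$ of $d$-tuples $D \subset W$ with $|N(D)| < \beta n$, where $N(D) = \bigcap_{v \in D} N(v) \subset V_2$. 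A $d$-tuple $D$ lies in $W$ iff $U \subset N(D)$, which happens with probability $(|N(D)|/n)^h$. Therefore
\[
\mathbb{E}[Z] \leq \binom{n}{d}\beta^{h} \leq n^{d}\beta^{h} \leq 1,
\]
using the hypothesis $\beta \leq n^{-d/h}$.

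Combining the two estimates, $\mathbb{E}[|W| - Z] \geq \alpha^h n - 1$, so some choice of $U$ achieves $|W| - Z \geq \alpha^h n - 1$. Deleting one vertex from each of the (at most $Z$) bad $d$-tuples yields a set $W_1 \subset V_1$ with $|W_1| \geq \alpha^h n - 1$ in which every $d$-tuple has at least $\beta n$ common neighbors in $V_2$, as required. There is no real obstacle here; the only minor point to watch is that the lemma is one-sided (common neighbors only in $V_2$), which actually makes the argument cleaner than Lemma \ref{drc} because we need only one random multiset $U$ rather than two, and no second color-class counting term appears.
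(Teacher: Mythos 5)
Your proposal is correct and follows exactly the same dependent random choice argument as the paper: a single random multiset of $h$ vertices from $V_2$, a convexity bound giving $\mathbb{E}[|W|]\geq\alpha^h n$, the bound $\mathbb{E}[Z]\leq n^d\beta^h\leq 1$ on bad $d$-tuples, and deletion of one vertex per bad tuple. No differences worth noting.
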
 \begin{proof} Let $A=\{x_1,\ldots,x_h\}$ be a subset of $h$ vertices chosen
uniformly at random with repetitions from $V_2$, and let $U_1=N(A)$. By linearity of expectation and
convexity of $f(z)=z^h$, we have $$\mathbb{E}[|U_1|]=\sum_{v \in V_1}\left(\frac{|N(v)|}{n}\right)^h
=n^{-h}\sum_{v \in V_1}|N(v)|^h \geq n^{1-h} \left(\frac{\sum_{v \in V_1}|N(v)|}{n}\right)^h\geq
\alpha^h n.$$ The probability that a given set $D \subset V_1$ of vertices is contained in $U_1$ is
$\left(\frac{|N(D)|}{n}\right)^h$. Letting $Y$ denote the number of $d$-tuples in $W_1$ whose common
neighborhood has size less than $\beta n$, we have
 $$\mathbb{E}[Y]<{n \choose d}\beta^h \leq n^d\beta^h \leq 1.$$ Hence, the expectation of $|U_1|-Y$ is more than
 $\alpha^h n-1$, and thus there is a choice $A_0$ of $A$ such that the corresponding value of $|U_1|-Y$
 is more than $\alpha^h n-1$.
Delete a vertex from every $d$-tuple $D$ in $U_1$ with less than $\beta n$ common
 neighbors. Letting $W_1$ be the resulting subset of $U_1$, it
 is clear that $W_1$ has the desired properties.
\end{proof}

The classical problem of Zarankiewicz in extremal graph theory asks: given $m,n,s,$ and $t$, what is the maximum number $z(m,n;s,t)$ of edges
of a bipartite graph with first part of size $m$
and second part of size $n$ if no $s$ vertices of the first part have $t$ vertices in common? A simple
double-counting argument (see page 310 of \cite{Bo})
demonstrates that \begin{equation}\label{Zar}
z(m,n,s,t)<(s-1)^{1/t}(n-t+1)m^{1-1/t}+(t-1)m.\end{equation}
We will use this bound in the proof of the following theorem, which, together with Lemma \ref{farmany},
establishes Theorem \ref{tournamentthm}.

\begin{theorem}
If $T$ is a tournament with $n$ vertices and $\delta n^3$ directed triangles with $n \geq \delta^{-4k/\delta}$, then $T$
contains $D_k$.
\end{theorem}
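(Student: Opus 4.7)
The plan is to locate $V_1$, $V_2$, and $V_0$ in that order. The main tools are a double-counting identity for directed triangles, a Zarankiewicz-style dependent random choice (Lemma \ref{drc2}), and a second round of DRC to harvest $V_0$.

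Double counting yields $\sum_{(x,y):\,x\to y}|N^-(x)\cap N^+(y)|=3\delta n^3$, since each directed triangle $v\to x\to y\to v$ is counted once by the pair $(x,y)$ with completing vertex $v$. Call a directed edge $(x,y)$ \emph{rich} if $|N^-(x)\cap N^+(y)|\geq 3\delta n$; Markov's inequality shows at least $\tfrac{3}{2}\delta n^2$ rich edges exist. Regard the rich-edge relation as a bipartite graph $G$ on two copies of $V(T)$ of density $\alpha\geq\tfrac{3}{2}\delta$, and apply Lemma \ref{drc2} with $h=\lceil 2k/\delta\rceil$ and $d=k$: the hypothesis $n\geq\delta^{-4k/\delta}$ makes $\beta:=n^{-k/h}=n^{-\delta/2}$ satisfy $\beta\leq n^{-d/h}$, producing a subset $W$ in the left copy with $|W|\geq\alpha^h n-1\geq 2^{k-1}=T(k)$ in which every $k$-subset has at least $\beta n$ common right-$G$-neighbors. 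Inside $W$ take a transitive subtournament $V_1$ of size $k$, let $C$ be its common $G$-neighborhood (so $|C|\geq\beta n$ and every pair in $V_1\times C$ is rich), and inside $C$ take a transitive subtournament $V_2$ of size $k$. Then $V_1\to V_2$ with both internally transitive.

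The final step is to locate $V_0$, transitive of size $k$, inside $U:=\bigcap_{v_1\in V_1}N^-(v_1)\cap\bigcap_{v_2\in V_2}N^+(v_2)$. Each individual intersection $N^-(v_1)\cap N^+(v_2)$ has size at least $3\delta n$ by richness, but a crude intersection over all $k^2$ pairs would lose a factor of $(3\delta)^{k^2}$, far too expensive. To avoid this, I strengthen the first DRC so that every $k$-subset of $W$ additionally has at least $\beta n$ common $T$-in-neighbors, and I choose $V_2$ inside $C$ via a parallel DRC that constrains its common $T$-out-neighborhood to intersect $N^-(V_1)$ in a large set. Each extra DRC filter costs an additional factor of roughly $\delta^{-k/\delta}$ in $n$; the cumulative cost is absorbed by the hypothesis $n\geq\delta^{-4k/\delta}$, after which $|U|\geq T(k)=2^{k-1}$ and a transitive subtournament $V_0\subset U$ of size $k$ exists. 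The triple $(V_0,V_1,V_2)$ is the desired copy of $D_k$.

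The main obstacle is the $k^2$-fold intersection at the $V_0$ step: absorbing its cost into the DRC rounds rather than into a direct intersection bound is what replaces $\delta^{-\Omega(k^2)}$ by $\delta^{-O(k/\delta)}$. The factor $1/\delta$ in the exponent arises because the DRC sample size must scale like $k/\delta$ to force common neighborhoods above the transitive-Ramsey threshold $T(k)$ when the underlying bipartite structure has density only $\Omega(\delta)$.
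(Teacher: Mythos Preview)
Your plan up to the choice of $V_1$ and $V_2$ is sound and close to what the paper does (the paper begins with a random tripartition rather than your global rich-edge count, but this is cosmetic). The gap is entirely at the $V_0$ step.

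You propose a ``parallel DRC'' on $C$ whose output $V_2$ has large common out-neighborhood inside $N^-(V_1)$. But Lemma~\ref{drc2} needs a density lower bound on the bipartite graph it is applied to, and you have no such bound for out-edges from $C$ into $N^-(V_1)$. Richness tells you only that $|N^-(v_1)\cap N^+(v_2)|\ge 3\delta n$ for each \emph{individual pair} $(v_1,v_2)\in V_1\times C$; it says nothing about $|N^-(V_1)\cap N^+(v_2)|$, since the $3\delta n$ witnesses may be entirely different as $v_1$ ranges over $V_1$. Knowing $|N^-(V_1)|\ge\beta n$ from the strengthened first DRC does not help: $N^+(v_2)$ could avoid $N^-(V_1)$ completely for every $v_2\in C$. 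So the second DRC, as described, has no density hypothesis to stand on, and the asserted cost of ``roughly $\delta^{-k/\delta}$'' per filter is unjustified. This is exactly the $k^2$-intersection obstacle reappearing in disguise.

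The paper avoids this by \emph{oversampling}: it applies DRC with $d=k/\delta$ (not $d=k$), obtaining transitive sets $X_1,X_2$ of size $d$ with $X_1\to X_2$ and each diagonal pair $(x_i,y_i)$ rich. It then builds the bipartite graph $F$ on $V_0\times\{1,\dots,d\}$ where $v_0\sim i$ iff $(v_0,x_i,y_i)$ is a directed triangle; richness gives $F$ at least $d\cdot\tfrac{\delta n}{2}=\tfrac{kn}{2}$ edges. A Zarankiewicz bound (inequality~\eqref{Zar}) then forces a complete bipartite subgraph with $n^{1/2}$ vertices in $V_0$ and $k$ indices in $\{1,\dots,d\}$; those $k$ indices pick out $U_1\subset X_1$, $U_2\subset X_2$, and any transitive $k$-set inside the $n^{1/2}$ vertices gives $U_0$. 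The point is that the pairwise richness information, which is all you have, is exactly what the Zarankiewicz step consumes --- it sidesteps the need for any density between $C$ and $N^-(V_1)$. If you want to rescue your outline, you will need either this Nikiforov-style argument or some other device that turns $d$ pairwise guarantees into a single $k$-fold one; a second DRC round does not do this.
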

\begin{proof}
Consider a random partition $V(T)=V_0 \cup V_1 \cup V_2$ of the
vertex set of $T$ into three disjoint subsets each of size $n/3$.
Then for every directed triangle $D$ in $T$ the probability that it
satisfies $|V_i \cap D|=1$ for $i=0,1,2$ and the edge between $V_1
\cap D$ and $V_2 \cap D$ is directed from $V_1 \cap D$ to $V_2 \cap
D$ is at least $1/9$. By linearity of expectation, there is a
partition $V(T)=V_0 \cup V_1 \cup V_2$ with $|V_i|=n/3$ for
$i=0,1,2$ for which there are at least $\delta n^3/9$ triples
$(v_0,v_1,v_2) \in V_0 \times V_1 \times V_2$ that form a directed
triangle such that the direction of the edge between $v_1$ and $v_2$
is from $v_1$ to $v_2$. We call such a triple $(v_0,v_1,v_2) \in V_0
\times V_1 \times V_2$ a {\it positively oriented directed
triangle}. Consider the bipartite graph $H$ with parts $V_1$ and
$V_2$, where $(v_1,v_2) \in V_1 \times V_2$ is an edge of $H$ if and
only if there are at least $\delta n/2$ vertices $v_0 \in V_0$ for
which the triple $(v_0,v_1,v_2)$ is a positively oriented directed
triangle. Note that if $(v_1,v_2) \in V_1 \times V_2$ is an edge of
$H$, then the edge between $v_1$ and $v_2$ in $T$ is directed from
$v_1$ to $v_2$. The number of positively oriented directed triangles
which do not contain an edge of $H$ is at most $|V_1||V_2|\delta
n/2=\delta n^3/18$. Hence, the number of positively oriented
directed triangles in $V_0 \times V_1 \times V_2$ containing an edge
of $H$ is at least $\delta n^3/9-\delta n^3/18=\delta n^3/18$, and
the number of edges of $H$ is at least $$\frac{\delta
n^3/18}{|V_0|}= \delta n^2/6.$$ So the fraction of pairs of $V_1
\times V_2$ that are edges of $H$ is at least $\frac{\delta
n^2/6}{|V_1||V_2|}=3\delta/2$.

By Lemma \ref{drc2} with $d=k/\delta$, $h=2d$ and $\beta=n^{-d/h}=n^{-1/2}$, there is a subset $W_1
\subset V_1$ of size at least $(3\delta/2)^{2d}|V_1|-1 \geq n^{1/2}$
such that every $d$-tuple in $W_1$ has at least $n^{1-d/h}=n^{1/2}$ neighbors in $H$. Since $n^{1/2} \geq 2^{d}$,
the result of Erd\H{o}s and Moser (mentioned in the introduction) implies that there is a transitive subtournament in $W_1$ of size $d$. Let $X_1=\{x_1,\ldots,x_d\}$ denote
the set of vertices of this transitive subtournament. The common neighborhood $N_H(X_1)$ of $X_1$ in graph $H$ has size at least
$n^{1/2}$, so there is a transitive subtournament in $N_H(X_1)$ of size $d$. We let $X_2=\{y_1,\ldots,y_d\}$
denote this transitive subtournament. Since $X_1$ and $X_2$ form a complete bipartite graph in $H$, then every
edge between $X_1$ and $X_2$ in tournament $T$ directs from $X_1$ to $X_2$.

To complete the proof, we will use a variant of an argument from \cite{Ni}.
Consider the bipartite graph $F$ with parts $V_0$ and $\{1,\ldots,d\}$, where $v_0 \in V_0$ is
adjacent to $i$ if and only if $(v_0,x_i,y_i)$ is a positively oriented directed triangle. Since each pair
$(x_i,y_i)$ is an edge of $H$, each $i \in \{1,\ldots,d\}$ is adjacent to at least $\delta n/2$ vertices
in $V_0$ in $F$. So the number of edges of $F$ is at least $d\delta n/2 =kn/2$. By (\ref{Zar}), $F$ contains
a complete bipartite graph with $n^{1/2}$ vertices in $V_0$ and $k$ vertices in $\{1,\ldots,d\}$,
since otherwise the number of edges of $F$ would be at most
\begin{eqnarray*}
z(n/3,d,n^{1/2},k) & < & (n^{1/2}-1)^{1/k}(d-k+1)(n/3)^{1-1/k}+(k-1)n/3<dn^{1-\frac{1}{2k}}+kn/3 \\ & = &
(k/\delta)n^{1-\frac{1}{2k}}+kn/3 \leq
\frac{\delta^{2/\delta}}{\delta}kn+kn/3<kn/2,\end{eqnarray*}
contradicting that $F$ has at least $kn/2$ edges (here recall, that by definition, $\delta \leq 1/6$).

So there is a subset $X_0  \in V_0$ of size at least $n^{1/2}$ and a subset $R=\{i_1,\ldots,i_k\} \subset \{1,\ldots,d\}$
such that every vertex in $X_0$ is adjacent to every vertex in $R$ in graph $F$. By construction,
every triple $(x,x_{i_j},y_{i_{j}})$
with $x \in X_0$ and $1 \leq j \leq k$ forms a positively oriented directed triangle.
Therefore, for each $x \in X_0$ and $1 \leq j \leq k$, the edge between $x$ and $x_{i_j}$ is directed from  $x$ to
$x_{i_j}$ and the edge between $y_{i_j}$ and $x$ is directed from $y_{i_j}$ to $x$. Since $n^{1/2} \geq 2^{d}$,
the result of Erd\H{o}s and Moser implies that there is a transitive subtournament in $X_0$ of size $k$. Let $U_0$
be the set of vertices of this transitive subtournament, $U_1=\{x_{i_1},\ldots,x_{i_k}\}$, and $U_2=\{y_{i_1},\ldots,y_{i_k}\}$.
Since $U_i \subset X_i$ and $X_i$ is a transitive subtournament for $i=1,2$, then $U_i$ is a transitive
subtournament. Since $U_0 \subset X_0$, then every edge between $U_0$ and $U_1$ is directed from $U_0$
to $U_1$ and every edge from $U_2$ to $U_0$ is directed from $U_2$ to $U_0$. Recall that every edge
between $X_1$ and $X_2$ has direction from $X_1$ to $X_2$, so every edge between $U_1$ and $U_2$
is directed from $U_1$ to $U_2$. We have shown that the three sets $U_0,U_1,U_2$ form a copy of $D_k$
in $T$. This completes the
proof. \end{proof}

\section{Concluding Remarks}

Although using a simple probabilistic argument it is easy to show that
the bound in Theorem \ref{tournamentthm} has the correct exponential dependence on $k$, we think the dependence
on $\epsilon$ can be further improved. Let $T(k,\epsilon)$ be the minimum positive integer
such that every tournament $T$ with $n \geq T(k,\epsilon)$ vertices that is $\epsilon$-far from being transitive contains
the tournament $D_k$.

\begin{conjecture}
There is a constant $c$ such that $T(k,\epsilon) \leq \epsilon^{-ck}$.
\end{conjecture}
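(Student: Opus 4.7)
The proof of Theorem~\ref{tournamentthm} loses a factor of $1/\epsilon^2$ in the exponent of the bound on $n$ at one specific step: the application of the Zarankiewicz bound~(\ref{Zar}) forces the intermediate transitive subtournaments $X_1 \subset V_1$ and $X_2 \subset V_2$ to have size $d = \Theta(k/\delta)$ rather than $k$, and the preceding dependent random choice (DRC) then requires $n \geq \delta^{-\Theta(k/\delta)}$. The plan is to eliminate this step: produce $X_1$ and $X_2$ of size exactly $k$ via a strengthened DRC, and find the third transitive subtournament $X_0 \subset V_0$ by a secondary DRC rather than by Zarankiewicz.

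First, apply Lemma~\ref{farmany} to reduce to the case of triangle density $\delta = \Omega(\epsilon^2)$, and use the random tripartition of the proof of Theorem~\ref{tournamentthm} to obtain parts $V_0, V_1, V_2$ with $\Omega(\delta) n^3$ positively oriented triangles; each bipartite forward-edge graph then has density at least $3\delta$. Second, apply a two-sided DRC to $V_1$: sample $h = 2k$ vertices from each of $V_0$ and $V_2$, and let $W_1$ be the set of $v_1 \in V_1$ that are common out-neighbors of the $V_0$-sample and common in-neighbors of the $V_2$-sample. A power-mean inequality in the spirit of Lemma~\ref{drc2} gives $|W_1| \geq (3\delta)^{2k}\, n/3$, and by imposing the usual bad-event bounds in both directions, every $k$-subset of $W_1$ has at least $n^{1/2}$ common in-neighbors in $V_0$ and at least $n^{1/2}$ common out-neighbors in $V_2$. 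The Erd\H{o}s--Moser theorem extracts a transitive $X_1 \subset W_1$ of size $k$; let $V_0^{\mathrm{out}} = \bigcap_{x \in X_1} N^-(x) \cap V_0$ and $V_2^{\mathrm{in}} = \bigcap_{x \in X_1} N^+(x) \cap V_2$, each of size at least $n^{1/2}$.

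Third, apply a further DRC to the bipartite tournament graph on $V_2^{\mathrm{in}} \cup V_0^{\mathrm{out}}$ whose edges are oriented from $V_2^{\mathrm{in}}$ to $V_0^{\mathrm{out}}$. Provided this bipartite graph has density at least $\delta^{O(1)}$, a standard application of Lemma~\ref{drc2} yields a transitive $X_2 \subset V_2^{\mathrm{in}}$ of size $k$ such that $V_0^{\ast} = V_0^{\mathrm{out}} \cap \bigcap_{x \in X_2} N^+(x)$ has at least $2^k$ vertices. Erd\H{o}s--Moser extracts a transitive $X_0 \subset V_0^{\ast}$ of size $k$, and by construction the triple $(X_0, X_1, X_2)$ forms a copy of $D_k$.

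The main obstacle is the density hypothesis in the third step. Each backward edge $v_2 \to v_0$ with $v_2 \in V_2^{\mathrm{in}}$ and $v_0 \in V_0^{\mathrm{out}}$ corresponds, for each $x \in X_1$, to a positively oriented triangle $(v_0, x, v_2)$. The backward-edge density in $V_2^{\mathrm{in}} \times V_0^{\mathrm{out}}$ thus equals, up to a factor of $k$, the number of positively oriented triangles passing through $X_1$ with endpoints in these restricted sets, divided by $|V_0^{\mathrm{out}}|\,|V_2^{\mathrm{in}}|$. The first DRC as described selects $X_1$ for its bipartite common-neighborhood properties but provides no guarantee on triangle participation, so a priori this density could be arbitrarily small. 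Strengthening the first DRC to prefer vertices through which many positively oriented triangles pass --- by, for instance, weighting the sample distribution by triangle count, or running an auxiliary DRC on the $3$-uniform hypergraph of positively oriented triangles --- is the technical core of the plan. Carrying out this simultaneous control of bipartite common neighborhoods and of triangle density through $X_1$ would yield the total bound $n \geq \delta^{-O(k)} = \epsilon^{-O(k)}$, matching the conjecture.
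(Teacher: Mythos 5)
This statement is an open conjecture in the paper, not a theorem: the paper offers no proof of it, and your proposal does not supply one either. What you have written is a plan whose decisive step you yourself flag as unresolved, and that step is not a technicality but the entire difficulty. After the first dependent random choice you obtain a transitive $X_1\subset V_1$ of size $k$ together with sets $V_0^{\mathrm{out}}$ and $V_2^{\mathrm{in}}$ of size about $n^{1/2}$, but nothing in that argument gives any lower bound on the number of edges oriented from $V_2^{\mathrm{in}}$ back to $V_0^{\mathrm{out}}$; these back edges are exactly the positively oriented triangles through all of $X_1$, and the DRC of Lemma \ref{drc2} controls only the sizes of common neighborhoods, not the orientation pattern between them. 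It is entirely consistent with everything you establish that every edge between the two sets is directed from $V_0^{\mathrm{out}}$ to $V_2^{\mathrm{in}}$, in which case the third step has density zero and no copy of $D_k$ is produced. This is precisely the obstruction that forces the paper's proof of Theorem \ref{tournamentthm} to take $d=k/\delta$ pairs and invoke the Zarankiewicz bound (\ref{Zar}): by working with $d$ independent pairs $(x_i,y_i)$ rather than a single $k$-set, one can afford to lose all but a $\delta$-fraction of the triangle structure at each pair and still extract $k$ good indices, at the cost of the $1/\delta$ (hence $1/\epsilon^2$) factor in the exponent.

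Your two suggested repairs are not carried out and neither is obviously viable. Reweighting the random samples by triangle count destroys the product/moment computation that yields the $(3\delta)^{2k}$ lower bound on $|W_1|$ (the two-sided version already needs a pointwise degree hypothesis, as in Lemma \ref{drc}, rather than an average one, and a triangle-weighted distribution does not factor over the two samples). Running a DRC on the $3$-uniform hypergraph of positively oriented triangles so as to simultaneously control common in-neighborhoods in $V_0$, common out-neighborhoods in $V_2$, and the triangle density through the chosen $k$-set is essentially a restatement of what needs to be proved; formulating and proving such a lemma with only a $\delta^{-O(k)}$ loss is the open problem, not a routine adaptation. So the proposal correctly diagnoses where the paper loses the factor $k/\epsilon^2$ in the exponent, but it does not close the gap, and the conjecture remains unproven.
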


It is not difficult to extend our proof of Theorem \ref{main} to establish the following
hypergraph generalization of Bollobas's conjecture.

\begin{theorem}
For each $\epsilon>0$ and positive integers $r$ and $k$, there is a minimum positive integer
$N=N(k,r,\epsilon)$ such that every $2$-edge-coloring of the complete $r$-uniform hypergraph with $n
\geq N$ vertices and at least $\epsilon{n \choose r}$ edges in each color contains disjoint vertex
subsets $V_1,\ldots,V_r$ each of size $k$ such that for every function $f:\{1,\ldots,r\} \rightarrow
\{1,\ldots,r\}$, all the edges $(v_1,\ldots,v_r)$ with $v_i \in V_{f(i)}$ for $1 \leq i \leq r$ have the
same color, but the edge coloring of $V_1 \cup \ldots \cup V_r$ is not monochromatic. \end{theorem}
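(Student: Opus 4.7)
My plan is to prove the theorem by induction on $r$, with base case $r=2$ supplied by Theorem~\ref{main}: for two parts of size $k$, the asserted type-monochromatic, non-monochromatic structure is precisely a member of $\mathcal{F}_k$ (the within-$V_1$, within-$V_2$, and cross edges each forming a monochromatic piece). For $r\geq 3$, the inductive step mirrors the three-ingredient framework of the $r=2$ proof---a degree-regularity lemma, dependent random choice, and hypergraph Ramsey---combined with the inductive hypothesis to reduce to the $(r-1)$-uniform case.

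First, I would extend Lemma~\ref{start} to hypergraphs: define the link $H_v$ of a vertex $v$ as the $(r-1)$-uniform 2-coloring on $V(H)\setminus\{v\}$ with $c_{H_v}(e)=c_H(\{v\}\cup e)$; then the set $S$ of vertices $v$ whose link has both color densities at least $\delta=\delta(\epsilon,r)>0$ satisfies $|S|\geq \delta n$. The proof follows Lemma~\ref{start} verbatim: partition $V=S\sqcup R\sqcup B$ into good, red-link-heavy, and blue-link-heavy vertices; assuming $|S|$ small and WLOG $|R|\geq |B|$, bound the blue edges by those with a vertex in $R$ (at most $|R|\delta\binom{n-1}{r-1}$) plus those contained in $B\cup S$ (at most $\binom{|B|+|S|}{r}$), which contradicts the hypothesis of $\geq \epsilon\binom{n}{r}$ blue edges. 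The calculation is a straightforward $r$-uniform analog of the one for Lemma~\ref{start}.

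Next, I would adapt Lemma~\ref{drc} by iterated dependent random choice: for each level $j=1,\ldots,r-1$, sample $h$ random $(r-j)$-subsets of $V(H)$ and keep vertices in $S$ whose extensions with each sample are of a prescribed color. Iterating across all levels produces a subset $T\subset S$ of polynomial-in-$n$ size such that every $j$-subset $W\subset T$ has large common $c$-link---the $(r-j)$-uniform hypergraph on $V(H)\setminus T$ of $(r-j)$-subsets $e$ with $c_H(e\cup W)=c$---of size at least $n^{1/2}$ for every color $c$ and every $j\leq r-1$. Applying $r$-uniform Ramsey in $T$ then yields a monochromatic $r$-uniform $k$-clique $V_r\subset T$ of some color $c_r$. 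The level-$1$ common $c_r$-link of $V_r$ is a 2-colored $(r-1)$-uniform hypergraph on at least $n^{1/2}$ vertices with both color densities bounded below by $\Omega_r(\epsilon)$ (inherited from the link condition on $S$), so the inductive hypothesis produces $V_1,\ldots,V_{r-1}$ of size $k$ with the $(r-1)$-uniform type-monochromatic, non-monochromatic structure.

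The main obstacle is ensuring \emph{all} $r$-uniform types on $V_1\cup\cdots\cup V_r$ are monochromatic, not merely those with exactly one vertex in $V_r$. Higher-multiplicity types $(a_1,\ldots,a_{r-1},b)$ with $b\geq 2$ are controlled by the iterated DRC at level $j=b$, which forces the edges with $b$ fixed vertices in $V_r$ to depend only on the complementary $(r-b)$-type. Types with $a_r=0$ (edges entirely inside $V_1\cup\cdots\cup V_{r-1}$) are $r$-uniform edges on $r-1$ parts and are not directly governed by the $(r-1)$-uniform inductive structure; these require one further multi-color Ramsey refinement within the $V_i$'s, treating each $r$-uniform type as a separate color and extracting mono-per-type sub-cliques of size $k$. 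Since the number of such types is bounded in terms of $r$, this Ramsey step loses only a constant number of tower levels, and the final bound on $N(k,r,\epsilon)$ is tower-exponential in $r$ with the polynomial-in-$1/\epsilon$ base from the $r=2$ case propagated through each level of the induction. The non-monochromaticity of the final structure is inherited from the inductive hypothesis.
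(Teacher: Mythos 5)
The paper gives no actual proof of this theorem---it appears in the concluding remarks with only the assertion that the proof of Theorem \ref{main} extends---so your proposal must stand on its own. Its skeleton (a link-density analogue of Lemma \ref{start}, dependent random choice, Ramsey, induction on $r$) is the natural one, and your identification of the base case with $\mathcal{F}_k$ is correct. But two steps have genuine gaps. First, dependent random choice never yields monochromaticity of a type: Lemma \ref{drc} and any iterated analogue only guarantee that every small subset of $T$ has a \emph{large} common link in \emph{each} color. Your claim that the iterated DRC at level $j=b$ ``forces the edges with $b$ fixed vertices in $V_r$ to depend only on the complementary $(r-b)$-type'' is therefore false; making each type monochromatic must come from a Ramsey-type cleaning step (a product Ramsey argument run simultaneously over all types and all choices of the $b$ vertices in $V_r$), and it is needed for every type, not only those with $a_r=0$. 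The role of DRC is merely to keep the relevant links large enough for those Ramsey applications.

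Second, and more seriously, the step that is supposed to deliver non-monochromaticity does not parse. The ``common $c_r$-link of $V_r$'' is a collection of $(r-1)$-sets, not a $2$-coloring of the complete $(r-1)$-uniform hypergraph on a vertex set, so the inductive hypothesis cannot be applied to it; and if you instead mean the link coloring $\chi_v(e)=\chi(e\cup\{v\})$ of a single $v\in S$ restricted to the vertices surviving DRC, the assertion that both colors remain dense there is unjustified---restricting a dense coloring to a set of size $n^{1/2}$ can make it monochromatic. In the $r=2$ proof this issue never arises because non-monochromaticity is secured structurally: $D$ is red, the $D$--$X$ edges are blue, and the color of $X$ is irrelevant. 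You need the analogous mechanism here, e.g., apply the inductive hypothesis to the link of a single $v\in S$ on the \emph{full} vertex set to lock in two $(r-1)$-types of different link-colors, then show (by pigeonhole or supersaturation over link patterns) that $v$ can be enlarged to a set $V_r$ of $k$ vertices whose links all agree on the chosen structure---agreement that is also needed, and not supplied by your DRC step, for the multiplicity-one types to be monochromatic at all---and only then run the Ramsey cleanup on the remaining types, which preserves the two differently colored types already secured. With these repairs the plan goes through, but as written it is not a proof.
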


The {\it tower function} $t_i(x)$ is defined by $t_1(x)=x$ and $t_i(x)=2^{t_{i-1}(x)}$ for $i \geq 2$. The
{\it hypergraph Ramsey number} $R_r(k)$ is the minimum $N$
such that every $2$-edge-coloring of the complete $r$-uniform hypergraph on $N$ vertices contains a monochromatic
complete $r$-uniform hypergraph on $k$ vertices. It is known (see \cite{GrRoSp}) that $R_r(k) \leq t_{r}(ck)$, where the constant $c$ depends on $r$.
It would be interesting to prove a similar upper bound for $N(k,r,\epsilon)$.

\vspace{0.2cm} \noindent {\bf Acknowledgment.}\, We would like to thank Jonathan
Cutler for sharing a copy of his paper with Bal\'azs Mont\'agh with
us.

\end{document}